\newcommand{\N}{\mathbb{N}}
\newcommand{\R}{\mathbb{R}}
\newcommand{\cF}{\mathcal{F}}
\newcommand{\cT}{\mathcal{T}}
\renewcommand\phi{\varphi}
\newcommand{\abs}[1]{|#1|}
\newcommand{\menge}[1]{\left\{#1\right\}}
\newcommand{\braces}[1]{\left(#1\right)}
\newcommand{\sub}{\subseteq}
\newcommand{\sm}{\smallsetminus}
\newcommand{\join}{\vee}
\newtheorem{THM}{Theorem}
\newtheorem{LEM}[THM]{Lemma}
\title{A Note on Generic Tangle Algorithms}
\author{Christian Elbracht\and Jakob Kneip\and Maximilian Teegen}
\begin{document}
\maketitle
\begin{abstract}
In this note we gather the theoretical outlines of three basic algorithms for tangles in abstract separation systems: a naive tree search for finding tangles; an algorithm which outputs a certificate for the non-existence of tangles if possible, and otherwise a way to jump-start the naive tree search; and a way to obtain a tree-of-tangles.
\end{abstract}

The algorithms we will describe are formulated in the setup and the language of \cite{ASS}.
In comparison to the algorithms of \cite{GroheSchweitzerTangleAlg} these are laid out for a more general setup, whereas the algorithms of \cite{GroheSchweitzerTangleAlg} are more elaborate.

The algorithms, in particular the first and the third, are what we have been using as general-purpose tools for our own explorative research in Hamburg over the past two years.
These algorithms are derived from proofs of the corresponding theorems from tangle theory, and we make no attempts to optimise their runtimes.

\section{Na\"ive tangle search algorithm}

Given a separation system $(\vS,\le,{}^*)$ and a set $\cF\sub 2^\vS$ we want to find all $\cF$-tangles of~$S$. Typically we will not be given the entire set $\cF$, but instead have oracle access to $\cF$: for any subset $\sigma$ of $\vS$ we will be able to check whether $\sigma$ lies in $\cF$. Note that we do not assume or use here that the elements of $\cF$ are stars.

In some cases the separation system $S$ comes with an order function, and we might want to compute the set of $k$-tangles of $S$ for different values of~$k$. In other cases, for instance if $S$ are the questions of some questionnaire, we might have some enumeration of $S$ and want to know the set of $\cF$-tangles of the first $k$ elements of $S$ for every $k$. Our algorithm described below will solve the latter problem. This includes the first problem as a special case: by enumerating the elements of $S$ in increasing order the $\cF$-tangles computed by the algorithm will include the set of $k$-tangles of $S$ for every $k$.

\paragraph{Input}
The separation system $S$, enumerated as $s_1,\dots,s_n$; oracle access to $\cF$.

\paragraph{Output}
For every $i\le n$ a list $\cT_i$ of all $\cF$-tangles of $\{s_1,\dots,s_i\}$.

\paragraph{Algorithm}
Set $\cT_0\coloneqq\{\emptyset\}$.

For $i$ from $1$ to $n$ perform the following:

Initialise $\cT_i$ as $\emptyset$. For every $\tau\in\cT_{i-1}$ and for both orientations of $s_i$ check whether that orientation $\vs_i$ of $s_i$ can be added to $\tau$, that is, whether $\tau$ contains a separation that points away from $\vs_i$ or a subset which, together with $\vs_i$, lies in $\cF$. If $\vs_i$ can be added to $\tau$ add $\tau\cup\{\vs_i\}$ to $\cT_i$.

Output $(\cT_1,\dots,\cT_n)$.

\paragraph{Remarks}
In the above algorithm we can terminate as soon as $\cT_i$ is found to be empty for any $i\ge 1$. It is easy to modify the above algorithm to only output all maximal tangles%
%\footnote{Here, `maximal' means maximal with respect to the enumeration, which reverts to the usual definition of maximal tangles of $S$ if the enumeration came from an order function.}
\footnote{Here, \emph{tangle} means an $ \cF $-tangle of $ \{s_1, \dots, s_i\} $ for some $ i \le n $, i.e.\ an element of $ \cT = \bigcup_i \cT_i $.
	It is \emph{maximal} if it is a $ \subseteq $-maximal element of $ \cT $.}
of $S$.

Finally, for purposes of parallelisation it is more efficient to implement the algorithm as a depth-first search rather than a breadth-first search over the (binary) tree of partial orientations.

\paragraph{Runtime}
The runtime of this algorithm is $O(|\cT|\cdot |S|^{m-1}\cdot f)$, where $\cT$ is the union of all $\cT_i$, $f$ is the time it takes to check whether a given subset of $S$ lies in $\cF$ and $ m $ is the maximum size of an element of $ \cF $. Note that the size of $\cT$ can be bounded by $|S|$ times the number of maximal tangles of $S$.
% In some cases the latter can be bounded ... [p.2, 2nd blue comment on left]
Indeed, in some applications reasonable bounds on this number of maximal tangles exist.
This runtime is calculated as follows: every tangle in $\cT$ is considered precisely once by the algorithm. If we consider $\tau\in \cT$, say, there is precisely one separation $s$ for which we check whether $\vs$ or $\sv$ can be added to $\tau$. For each of these checks, we consider every subset of size at most $m-1$ of $\tau$ and check whether this subset together with $\sv$ or $\vs$ forms an element of $\cF$. As there are at most $O(|S|^{m-1})$ such subsets, this whole procedure is $O(|S|^{m-1}\cdot f)$.

\section{Tangle-tree duality}

In~\cite{TangleTreeAbstract} it was shown that, under certain assumptions on $S$ and $\cF$, either $S$ has an $\cF$-tangle or there is an $S$-tree over $\cF$ which certifies that there can be no such $\cF$-tangle of~$S$. For a separation system satisfying the assumptions of the tangle-tree duality theorem given in~\cite{TangleTreeAbstract,AbstractTangles}, the algorithm described below will compute an $S$-tree over $\cF$ if there is no $\cF$-tangle of $S$, and otherwise find a consistent $\cF$-avoiding partial orientation which is a subset of every $\cF$-tangle of $S$, which can then be used as a starting point for the first algorithm which finds all tangles of $S$.

Our approach is to construct a list $L$ of all oriented separations that every $\cF$-tangle of $S$ has to contain by analysing the stars in $\cF$. As an example as well as the first step of the algorithm, consider a separation $ \vs\in\vS $ for which $\{\sv\}$ is a star in $\cF$. Then every $\cF$-avoiding orientation of $S$ and hence every $\cF$-tangle has to orient $s$ as $\vs$. Thus we can add all such separation $\vs$ to the list $L$ of all separations which every $\cF$-tangle is `forced' to contain. In every iteration of the algorithm we search $\cF$ for stars for which all but one of their elements lies in $L$. If $\sigma\in\cF$ is such a star and all elements of $\sigma$ other than some $\sv\in\sigma$ already lie in $L$, then similarly as above every $\cF$-tangle of $S$ must orient $s$ as $\vs$ in order to avoid $\sigma$; hence we can add every such $\vs$ to $L$.

When adding a separation $\vs$ to $L$ we keep track of the star in $\cF$ which caused the addition of $\vs$ to $L$. If at any point during our algorithm the list $L$ contains both orientation of some separation $s$, we can use this information to easily build an $S$-tree over $\cF$. Otherwise, if $L$ is always anti-symmetric, then eventually no star in $\cF$ will miss $L$ in exactly one element, at which point we output $L$. We will later show that the latter happens if and only if $S$ has an $\cF$-tangle, which then has to contain $L$.

\paragraph{Conditions}
Let $(\vU, \le, {}^*,\vee,\wedge)$ be a universe of separations containing a finite separation system $(\vS,\le,{}^*)$. Let $\cF\subseteq 2^{\vS}$ be a set of stars which is standard for $\vS$ and such that $\vS$ is $\cF$-separable.

\paragraph{Input}
The set $\cF$; oracle access to ${}^*$.

%TODO: We could mention at the end that we could just as well know all of $S$ and have oracle access to $\cF$.

\paragraph{Output}
The information whether an $ \cF $-tangle of $ S $ or an $ S $-tree over $ \cF $ exists.
Either an $S$-tree over $\cF$, witnessing that there is no $\cF$-tangle of $S$, or if no $S$-tree over $\cF$ exists, a partial consistent $\cF$-avoiding orientation of $S$ such that every $\cF$-tangle of $S$ contains that partial orientation.

%TODO: We should think about, whether we can modify this algorithm to work with an order function, i.e. that we get an $\cF$-tree over $\vS_k$ for the smallest $k$-for which such a tree exists. Christian (27.2.): This is the case, just add, in every step,among all separations that can be added to L, only one of lowest order.

\paragraph{Algorithm}
Initialize two lists $L$ and $M$ as $\emptyset$.

Repeat the following procedure until it terminates:

Iterate over every star $\sigma$ in $\cF$. For every star $\sigma\in\cF$ check whether there is some element $\sv\in\sigma$ with $\vs\notin L$ for which all other elements of $\sigma$ are contained in $L$. If so, add $\vs$ to $L$ and add $(\vs,\sigma)$ to $M$. If, after this addition, both $\vs$ and $\sv$ lie in $L$, compute an $S$-tree over $\cF$ as described below and output that $S$-tree. If, on the other hand, $\cF$ contains no such star for which we add a separation to $L$, terminate and output $L$.

To construct an $S$-tree over $\cF$ in case $L$ contains both orientations of some separation~$s$, proceed as follows. For $\vr\in L$ let $\sigma_{\vr}$ denote the star with $(\vr,\sigma_{\vr})\in M$. Note that for every element $(\vr,\sigma_{\vr})$ of $M$ and every $\vt\in\sigma_{\vr}\sm\{\rv\}$, the separation $\vt$ lies in $L$ since otherwise $ (\vr, \sigma_{\vr}) $ wouldn't have been added to $ M $. In particular, $ \vt $ was added to $ L $ at an earlier step than $ \vr $, and there is a star $\sigma_{\vt}$ such that $ (\vt,\sigma_{\vt})\in M $. Observe further that if $ \sigma_{\vr}=\menge{\rv} $ then $ \menge{\rv}\in\cF $.

For the construction, start with the tree $T$ consisting of the vertices $\sigma_{\vs}$ and $\sigma_{\sv}$ joined by an edge labelled with $s$, with $\vs$ pointing towards $\sigma_{\sv}$. We now iterate the following until $T$ is an $S$-tree over $\cF$: for every leaf $\sigma_{\vr}$ of $T$ and every $\vt\in\sigma_{\vr}\sm\{\rv\}$, add the star $\sigma_{\vt}$ as a new vertex to $T$ and join it to $\sigma_{\vr}$ by an edge labelled with $t$, with $\vt$ pointing towards~$\sigma_{\vr}$. Since $ (\vt, \sigma_{\vt}) $ was added to $ M $ at an earlier step than $ (\vr, \sigma_{\vr}) $ was, this construction terminates.

\paragraph{Termination}
Since $S$ and $\cF$ are finite and the length of $L$ increases with every iteration over $\cF$ the algorithm eventually terminates.

\paragraph{Correctness}
It suffices to show that the algorithm outputs an $ S $-tree over $ \cF $ if and only if there is one. Clearly, the algorithm finding an $ S $-tree over $ \cF $ demonstrates the existence of an $ S $-tree over $ \cF $.

For the converse, suppose that $ (T,\alpha) $ is an $ S $-tree over $ \cF $, and let us show that the algorithm eventually finds and outputs an $ S $-tree over $ \cF $ (though not necessarily $ (T,\alpha) $ itself). For this it is enough to show that, for as long as $ L $ is anti-symmetric, at least one separation in the image of $ \alpha $ gets added to $ L $ during each iteration of the algorithm over $ \cF $: this would show that the algorithm cannot terminate with an anti-symmetric $ L $ and hence has to terminate on some $ S $-tree over $ \cF $.

So suppose that $ L $ is anti-symmetric and let $ P $ be a longest directed path in $ T $ without an edge whose inverse's image under $ \alpha $ lies in $ L $. Since $ L $ is anti-symmetric $ P $ is not a trivial path. Let $ v $ be the last vertex of $ P $ and $ w $ its neighbour on $ P $, and further let $ \sigma=\alpha(v) $ and $ \vs=\alpha(vw) $.
Then $ \sigma\in\cF $ will ensure that $ \vs $ gets added to $ L $ in this iteration: by definition of $ P $ we have $ \vs\notin L $, and $ L $ must contain every element other than $ \sv $ from $ \sigma $ by the maximality of $ P $.
Thus the algorithm can only terminate with an $ L $ which is anti-symmetric, meaning it outputs an $ S $-tree as shown above.

\paragraph{Runtime}
The runtime of this algorithm is $O(|\cF||S|^2)$: we iterate at most $ \abs{S} $ many times over $ \cF $, and every time we go through $ \cF $, we determine for every $ F\in\cF $ whether the size of $F \setminus L$ is $ 1 $. This can be done in $ O(\abs{S}) $ time, giving a total runtime of~$O(|\cF||S|^2)$. 

If the size of the elements of $ \cF $ is bounded the runtime of the algorithm decreases to $ O(|\cF||S|) $, because the check whether $ |F \setminus L| $ is 1 can then be implemented so as to run in constant time.

\paragraph{Remarks}
With a slight modification the above algorithm can be made to take $S$ and oracle access to $\cF$ as input rather than $\cF$ and oracle access to the involution of $S$. Here, by oracle access to $\cF$ we mean that we are able to check whether a given star in $S$ lies in $\cF$ or not.

Note further that the assumptions of the tangle-tree duality theorem, that $\cF$ is standard for $S$ and that $S$ is $\cF$-separable, were not used in the correctness proof that the algorithm finds an $S$-tree over $\cF$ whenever one exists. Indeed, these assumptions are only necessary to ensure, by using the tangle-tree duality theorem, that there is an $\cF$-tangle of $S$ in case the algorithm does not find an $S$-tree over $\cF$. 
In fact, the algorithm imitates a proof of the tangle-tree-duality theorem given by Bowler at a block seminar in Spr\"otze 2017. \cite{Sproetze}
%In fact, as Bowler pointed out at the block seminar in Spr\"otze 2017, one can derive a proof of the tangle-tree duality theorem from the above algorithm by proving directly that an $\cF$-tangle of $S$ exists if the algorithm outputs $L$.

If the size of the elements of $ \cF $ is bounded and $ \cF $ is given as a set rather than an oracle,
one can also construct a more elaborate data structure which stores for every $ \vs \in \vS $ the elements of $ \cF $ that contain it. With the help of this data structure the algorithm can be modified to run in $ O(|\cF| \log |\cF| + |S|) $ time.
However in practice one is rather unlikely to obtain $ \cF $ as a set: typically, $ \cF $ is defined as, say, the set of all stars whose interior has a certain size, in which case it is expensive to compute $ \cF $ as a set, but easy to check for any given subset $ \sigma $ of $ \vS $ whether $ \sigma $ lies in $ \cF $, i.e. to have oracle access to $ \cF $.

\section{Tree-of-Tangles  algorithm}

In this section we describe an algorithm for building a \emph{tree of tangles} as postulated by the tree-of-tangles theorem in \cite{ProfilesNew}.
The core feature of this algorithm is, that it does not require full information of the tangles and the separations system, but works by local improvements. It follows ideas from our `splinter theorem' in \cite{FiniteSplinters}.

Let $\vU=(U,{}^*,\le,\vee,\wedge,|\cdot|)$ be a universe with a submodular order-function.
For the sake of staying consistent with existing notation, let us assume that $ |\cdot| $ takes its values in $ \N_0 $, but do note that the algorithm works for any $ \R_+ $-valued order function.
Let $\vS\subseteq \vU$ be an abstract separation system, and let  $\cF \subseteq 2^{\vU}$. We assume that $ \cF $ is such that the $ \cF $-tangles of every $ S_k \subseteq U $ are robust profiles in $ \vU $.

We are interested in tangles of subsets of $ S $.
% For an integer $ k $ a {\em $ k $-tangle} of $ S $ is a consistent $ \cF $-avoiding orientation of the set of those separations in $ S $ that have order $ <k $. A {\em maximal} tangle of $ S $ is a $ k $-tangle of $ S $ that is not the restriction of some $ (k+1) $-tangle of $ S $.
Let us call a consistent $ \cF $-avoiding orientation of all separations in $ S $ up to some order a \emph{tangle in $ S $}. A tangle \emph{of} $ S $ is one in $ S $ which orients \emph{every} separation of $ S $. A \emph{maximal} tangle in $ S $ is one which is $ \subseteq $-maximal among the tangles in $ S $.

Note that we are not making any assumptions about $ S $: $ S $ need not be of the form $ S_k $ for some $ k $; in fact, it need not even be structurally submodular. Thus some parts of the usual tangle theory and intuition do not apply to these tangles in $ S $: the tangles in $ S $ need not be, or extend to, tangles in $ U $. Let us call a tangle in $ S $ a {\em real} tangle in $ S $ if it is a subset of some tangle in $ U $, and a {\em fake} tangle in $ S $ otherwise.

Our aim is to find a nested set of separations distinguishing all maximal tangles in $ S $. As we made no assumptions about $ S $ at all, it might (and typically will be) impossible to find such a nested set inside of $ S $. Thus we need to define what it shall mean that a separation $ s\in U $ distinguishes a pair of tangles in $ S $, if those tangles need not contain some orientation of $ s $. Furthermore we might not be able to distinguish all maximal tangles in $ S $: by the Tree-of-Tangles-Theorem, we certainly will be able to distinguish all real tangles in $ S $ with a nested set if we are allowed to use separations from $ U $, but we might not be able to find a nested set which also distinguishes all fake tangles. Finally, $ U $ might be much bigger than $ S $, so we might not want to extend the real tangles of $ S $ to all of $ U $ in order to find distinguishers for them.

To distinguish the tangles of $ S $ we will extend them to also include some separations outside of $ S $. %From now on, we shall use the term `tangle' to describe any anti-symmetric consistent $ \cF $-avoiding subset of $ \vU $. The {\em order} of a tangle shall be the maximal order of a separation in that tangle.  An {\em extended tangle} (of $ S $) is a tangle whose intersection with $ S $ is a $ k $-tangle of $ S $ for some $ k $, and whose order is not greater than the order of its restriction to $ S $. As before, an extended tangle is {\em real} if it extends to some $ k $-tangle of $ U $, and {\em fake} otherwise.
An \emph{extension of a tangle in $ S $}, or \emph{extended tangle} for short, is a tangle $ \tau' $ of some subset $ S' \subseteq U $ such that its intersection with $ \vS $ is a tangle $ \tau $ in $ S $, where the order of every separation in $ \tau' $ is at most as large as the order of some separation in $ \tau $.
A separation $ s \in U $ \emph{distinguishes} a pair of extended tangles $ \tau, \rho $ if it has an orientation $\vs$ with $ \vs \in \tau $ and $ \sv \in \rho $.

\paragraph{Overview}
The algorithm described below will find the following: a set $ \cT $ of extended tangles of $ S $ whose restrictions to $ \vS $ include all the real maximal tangles of $ S $, together with a nested set $ N\sub U $ which distinguishes all of $ \cT $.
% Max: We can potentially get `maximal real' instead of `real maximal' by not necessarily discarding a tangle but first trying to remove high order sepearations from it. Whether this actually works, and terminates, is unclear.
%Christian: We should be able to guarantee this, if we start with all tangles of S, not only the maximal ones (and then only consider pairs of distinguishable tangles during the process.

Roughly speaking, the algorithm works as follows: we start with a list $ \cT $ of all maximal tangles of $ S $ and a set $ N $ of separations in $ S $ distinguishing all of $ \cT $. That is, $ N $ contains for every pair $ \tau,\rho $ of distinct tangles in $ \cT $ a separation $ \{\vs_{\tau,\rho},\vs_{\rho,\tau}\} $ with $ \vs_{\tau,\rho}\in\rho $ and $ \vs_{\rho,\tau}=\sv_{\tau,\rho}\in\tau $. Initially $ N $ is a subset of $ S $, but might not be nested. We will modify it to make it nested, which will make it necessary to replace some of its separations with elements of $ U \setminus S $.

For as long as $ N $ is not nested, we consider some pair of crossing separations $ s_{\tau, \rho} $ and $ s_{\phi,\psi} $ in $ N $ as well as a corresponding pair of pairs of tangles $ \{\tau,\rho\} $ and $ \{\phi,\psi\} $ in $ \cT $, and attempt to replace one of $ s_{\tau, \rho} $ or $ s_{\phi,\psi} $ in $ N $ with one of their four corners, without increasing the order of that separation $ s_{\tau, \rho} $ or $ s_{\phi,\psi} $. Here, `attempting to replace' $ s_{\tau,\rho} $ by the corner $ r $, say, of $ s_{\tau, \rho} $ and $ s_{\phi,\psi} $ means that we check whether $ r $ can be added in different orientations to the tangles $ \tau $ and $ \rho $ forming a pair of extended tangles, and if so, add those oriented separations to our two tangles, assigning $ s_{\tau,\rho} $ to be $ r $, adding $ r $ to the $ N $ and removing the previous $s_{\tau,\rho}$ from $ N $ (if it isn't still the $ s_{\tau',\rho'} $ for some other pair of $ \tau',\rho' $).

If this fails for all four corners, that is, if none of the four corners can be used to replace either $ s_{\tau, \rho} $ or $ s_{\phi,\psi} $, we shall be able to conclude that one of our four tangles involved, $ \tau $ say, is in fact a fake tangle. In that case we remove $ \tau $ from $ \cT $ and accordingly delete all separations from $ N $ which are not designated to distinguish a pair of tangles in the new, smaller, $ \cT $. By considering the crossing pairs of separations in a carefully chosen order we ensure that this replacement algorithm terminates. (We can delete fake tangles from $ \cT $ only finitely many times, but we need to take care not to run around in circles when we replace separations in $ N $.)

A difficulty here is that when we replace a separation of $ N $ which distinguishes a pair $ \tau,\rho $ of extended tangles in $ \cT $ by some $ r\in U $, we might be able to add $ \vr $ to $ \tau $ and $ \rv $ to $ \rho $, but it can happen that moreover $ \tau\cup\{\rv\} $ is an extended tangle of~$ S $. In that case we extend $ \tau $ by $ \vr $ but also add $ \tau\cup\{\rv\} $ to $ \cT $. This new tangle in $ \cT $ is distinguished from $ \tau $ by $ r $, and otherwise inherits its distinguishing separations from $ \tau $. Extra care will be needed to ensure that the algorithm terminates despite these splitting possibilities.

\paragraph{Input}
The separation system $ S $; the set of maximal tangles in $ S $; for every pair $ \tau,\rho $ of maximal tangles in $ S $ a separation $ \vs_{\tau,\rho} $ in $ S $ with $ \vs_{\tau,\rho}\in\rho $ and $ \vs_{\rho,\tau}=\sv_{\tau,\rho}\in\tau $; oracle access to ${}^*,\vee,\wedge, |\cdot|$ and $\cF$.

\paragraph{Output}
A set $ \cT $ of extended tangles whose restrictions to $ \vS $ include all real maximal tangles in $ S $; and for every pair $ \tau,\rho $ of tangles in $ \cT $ a separation $ s_{\tau,\rho}'\in U $ which distinguishes $ \tau $ and $ \rho $ and whose order is at most the order of $ s_{\tau,\rho} $, such that the set $ N $ of all these $ s_{\tau,\rho} $ is nested.

From now on, we shall treat the $ \vs_{\tau,\rho} $ as variables of the algorithm, so the $ \vs_{\tau,\rho} $ from the input will evolve throughout the algorithm and end up as the $ \vs'_{\tau,\rho} $ from the output.

\paragraph{Preliminaries}
Let $ \cT $ be a set of extended tangles. For $ \tau\in\cT $ and $ \vr\in\vU $ we say that we {\em can extend} $ \tau $ by $ \vr $, or that $ \vr $ {\em can be added} to $ \tau $, if $ \tau\cup\{\vr\} $ is again an extended tangle. If $ s $ is a separation distinguishing a pair of extended tangles $ \tau,\rho\in\cT $, we say that we {\em can replace} $ s $ (for $\tau$ and $\rho$) with some $ r\in U $ if $ \abs{r}\le\abs{s} $ and there are orientations $ \vr $ and $ \rv $ of $ r $ that can be added to $ \tau $ and $ \rho $, respectively. We say that replacing $ s $ for $\tau$ and $\rho$ with $ r $ {\em causes a split} if for one or both of $ \tau $ and $ \rho $ both $ \vr $ and $ \rv $ can be added to that tangle (yielding two new tangles).

%Let $ N\sub U $ be a set containing for every pair $ \tau,\rho $ of extended tangles in $ \cT $ a separation $ s_{\tau,\rho} $ distinguishing that pair, i.e. $ \vs_{\tau,\rho}\in\rho $ and $ \sv_{\tau,\rho}\in\tau $. In that context, replacing $ s_{\tau,\rho} $ (in $ N $) with some $ r\in U $ means that we extend $ \tau $ and $ \rho $ by inverse orientations $ \vr $ and $ \rv $ of $r$, replace $\rho$ and $\tau$ by these extend tangles $\tau'$ and $\rho'$ everywhere, and re-define $ s_{\tau',\rho'}\coloneqq r $ in $ N $. (Note that if $ s_{\tau,\rho}=s_{\phi,\psi} $ for some other pair of tangles in $ \cT $, we do not re-define $ s_{\phi,\psi} $, which hence remains in $ N $. That is, we do {\em not} necessarily remove $ s_{\tau,\rho} $ from $ N $ as a set when replacing it by $ r $.) If replacing $ s_{\tau,\rho} $ with $ r $ causes a split, say because $\vr$ and $ \rv $ can both be added to $ \tau $, we replace as before $\tau$ by $\tau'$, additionally add $ \tau''\coloneqq\tau\cup\{\rv\} $ to $ \cT $, and set $ s_{\tau',\tau''}=r $ as well as $ s_{\tau'',\phi}\coloneqq s_{\tau',\phi} = s_{\tau,\phi} $ for all $ \tau\ne\phi\in\cT $.

\begin{LEM}\label{lem:findfake}
    Let $ (\tau, \rho)$ and $(\phi, \psi)$ be two pairs of extended tangles and let
    $ s $ and $ t $ be separations distinguishing $ \tau $ from $ \rho $
    and $ \phi $ from $ \psi $, respectively.
    Then one of the following holds:
    \begin{itemize}[nolistsep]
    \item[(1)] we can replace $ s $ with $ t $ or some corner of $ s $ and $ t $;
    \item[(2)] we can replace $ t $ with $ s $ or some corner of $ s $ and $ t $;
    \item[(3)] one of $ \tau,\rho,\phi $ and $ \psi $ is fake.
    \end{itemize}
\end{LEM}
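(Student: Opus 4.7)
\emph{The plan is to prove the contrapositive:} assume that none of $\tau,\rho,\phi,\psi$ is fake, and deduce (1) or (2). By assumption, each of the four extended tangles is real, so it extends to a full tangle of $\vU$; write $\hat\tau,\hat\rho,\hat\phi,\hat\psi$ for such extensions. Each of these is, by hypothesis, a robust profile that orients every separation of $U$, so it assigns a definite orientation to $s$, to $t$, and to each of the four corners of $s$ and $t$. The orientations $\vs\in\hat\tau$, $\sv\in\hat\rho$, $\vt\in\hat\phi$, $\tv\in\hat\psi$ are forced; the free parameters are the orientations of $t$ in $\hat\tau,\hat\rho$ and of $s$ in $\hat\phi,\hat\psi$, giving a case-split which I expect to collapse by symmetry to only a handful of essentially different configurations.

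\emph{The main step} is, in each configuration, to pinpoint a corner $r$ of $s$ and $t$ which can serve as replacement. The profile property, together with consistency of the $\hat{\cdot}$'s, ensures that for any two elements $\vu,\vv$ of a robust profile we have $\vu\join\vv$ in it too; combined with the fixed orientations above, this determines in which quadrant of the $s,t$-cross each of $\hat\tau,\hat\rho,\hat\phi,\hat\psi$ sits, and hence singles out a corner $r$ whose two orientations $\vr,\rv$ lie respectively in $\hat\tau$ and $\hat\rho$ (and similarly a corner $r'$ for the pair $(\hat\phi,\hat\psi)$). The inclusions $\tau\cup\menge{\vr}\sub\hat\tau$ and $\rho\cup\menge{\rv}\sub\hat\rho$ are then automatic, so both enlarged sets inherit consistency and $\cF$-avoidance from the extensions, and form extended tangles the moment the order condition $|r|\le|s|$ (or analogously $|r'|\le|t|$ on the other side) holds.

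\emph{The main obstacle} is reconciling the order constraint with the corner choice that the profile property forces on us. Submodularity of $|\cdot|$ gives $|\vs\join\vt|+|\vs\meet\vt|\le|s|+|t|$ together with the analogous inequality for the other pair of corners, which means that in each opposite pair at least one corner has order $\le|s|$ and the other $\le|t|$. The crux is to run the argument on $(\hat\tau,\hat\rho)$ and on $(\hat\phi,\hat\psi)$ in parallel, matching cases so that whenever the corner singled out for replacing $s$ exceeds $|s|$, the opposite corner (of order $\le|t|$ by the submodular inequality) is precisely the one singled out for replacing $t$ on the $(\hat\phi,\hat\psi)$ side, yielding (2) instead of (1). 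This is where robustness of the extending tangles is essential: it lets us swap between the two opposite orientations of a corner without falling out of the class of extended tangles.
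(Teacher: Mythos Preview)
There is a genuine gap in your plan. You assert that the full extensions $\hat\tau,\hat\rho,\hat\phi,\hat\psi$ ``orient every separation of $U$'', but the paper's hypothesis does not give you this: a tangle in $U$ is a consistent $\cF$-avoiding orientation of some $U_k$, not of all of $U$. Concretely, if $|s|<|t|$ then $\hat\rho\supseteq\rho\ni\sv$ is only guaranteed to orient separations of order at most that of some element of $\rho$; there is no reason it should orient $t$, let alone the two corners $\vs\vee\vt$ and $\vs\vee\tv$, whose orders may exceed $|t|$. Your ``main step'' of reading off each tangle's quadrant from profile and consistency therefore breaks down precisely in the situation the paper treats last, where both $\vs\vee\vt$ and $\vs\vee\tv$ have order $>|t|$: here the only corners small enough to lie in the domain of $\hat\tau$ and $\hat\rho$ are the two opposite ones (of order $<|s|$), and neither profile nor consistency tells you how $\hat\rho$ orients them from $\sv\in\hat\rho$ alone. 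This is exactly where robustness enters, and not in the way you describe (``swap between the two opposite orientations of a corner''): robustness is what ensures that at least one of $\sv\vee\vt$, $\sv\vee\tv$ belongs to $\hat\rho$.

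There is also a secondary imprecision. Even when the extensions do orient both $s$ and $t$, each pair $(\hat\tau,\hat\rho)$ is distinguished by \emph{two} corners (generally not opposite), not by a single ``singled-out'' one; and your matching claim that ``the opposite corner is precisely the one singled out for replacing $t$'' fails, for instance, when $\hat\tau$ and $\hat\phi$ both contain $\vs$ and $\vt$ (then the profile corner is the same for both pairs, not opposite). The contrapositive framing is perfectly reasonable, but it does not let you bypass the case analysis the paper carries out; you still need to argue, as the paper does, that either a corner of order $\le|t|$ replaces $t$ via the profile property on $\phi,\psi$, or else both opposite corners have order $<|s|$ and robustness on $\rho$ produces a replacement for $s$.
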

\begin{proof}
    
By symmetry we may assume that $ \abs{s}\le\abs{t} $. If $ s $ has orientations $ \vs $ and $ \sv $ that can be added to $ \phi $ and $ \psi $, respectively, then $ t $ can be replaced with $ s $. Suppose this does not happen, so assume that some orientation of $ s $, say $ \sv $, can be added to neither of $ \phi $ and~$ \psi $. Since $ \abs{s}\le\abs{t} $ this means that for both of $ \phi $ and $ \psi $, either $ \vs $ can be added to that tangle, or we know that it must be fake. So suppose that $ \vs $ can be added to both $ \phi $ and~$ \psi $.

Consider the corner separations $ \vs\join\vt $ and $ \vs\join\tv $, where $ \vt\in\phi $ and $ \tv\in\psi $. If $ \abs{\vs\join\vt}\le\abs{t} $, then either $ \vs\join\vt $ can be added to $ \phi $ by the profile property, or we know that $ \phi $ must be fake; furthermore, we would know that $ (\vs\join\vt)^* $ can be added to $ \psi $ by consistency, or else $ \psi $ must be fake. Thus, if $ \abs{\vs\join\vt}\le\abs{\vt} $, either we discover that one of $ \phi $ or $ \psi $ is fake, or we can replace $ t $ with that corner. Similarly, the same statement holds for $ \vs\join\tv $.

So suppose that both of $ \vs\join\vt $ and $ \vs\join\tv $ have order strictly greater than $ \abs{t} $. Then the two opposing corners, $ \sv\join\vt $ and $ \sv\join\tv $, both have order strictly smaller than $ \abs{s} $. By symmetry we may assume that $ \vs\in\tau $ and $ \sv\in\rho $. Then by robustness one of those two corner separations can be added to $ \rho $ , or else we know that $ \rho $ is fake. In the first case, the inverse of that corner can be added to $ \tau $ by consistency, or else we know that $ \tau $ is fake. If neither of $ \rho $ and $ \tau $ is found to be fake in this way, then that corner can replace~$ s $.
\end{proof}

\paragraph{Algorithm}
Initialize $ \cT(0) $ as the set of all maximal tangles in $ S $, and initialize $ N(0) $ as the set of all $ \vs_{\tau,\rho} $ from the input. Fix an arbitrary enumeration of the (unordered) pairs of tangles in $ \cT(0) $; we say that the $n$-th pair in this enumeration has {\em index} $n$.
\\

We will iterate the following steps until we output the desired nested set. Let $ \ell $ denote the count of iterations. For the $\ell$th step, perform the following:\\

If $  N(\ell) $ is nested, output $ \cT(\ell) $ and $ N(\ell) $ and terminate the algorithm.
Otherwise for an integer $k$ let $ N_{<k}(\ell) $ denote the set of all $ \{\vs_{\tau,\rho},\vs_{\rho,\tau}\}\in N(\ell) $ where $ \{\tau,\rho\} $ has index~$ <k $. Consider as $ k = k(\ell) $ the largest integer such that $ N_{<k}(\ell) $ is nested.
Let $ \tau,\rho $ be the $ k $-th pair of tangles and $ s\coloneqq\{\vs_{\tau,\rho},\vs_{\rho,\tau}\} $.
%If $ s\coloneqq\{\vs_{\tau,\rho},\vs_{\rho,\tau}\} $ is nested with $ N_{<k} $, continue with the $ (k+1) $-st pair of tangles.
For every $ t\coloneqq\{\vs_{\phi,\psi},\vs_{\psi,\phi}\}\in N_{<k}(\ell) $ which crosses $ s $ one of the following happens by \cref{lem:findfake}:
\begin{enumerate}[nolistsep]
	\item[(1)] we can replace $ s $ with $ t $ or some corner of $ s $ and $ t $;
	\item[(2)] we can replace $ t $ with $ s $ or some corner of $ s $ and $ t $;
	\item[(3)] we find that one of $ \tau,\rho,\phi $ and $ \psi $ is fake.
\end{enumerate}
If (1) occurs for {\em some} $ t\in N_{<k}(\ell) $ which crosses $ s $, replace $ s $ with $ t $ or that corner of $ s $ and $ t $ to obtain $ N(\ell + 1) $ and $ \cT(\ell+1) $ as follows: let $ \cT(\ell+1) $ be the set $ \cT(\ell)\sm\menge{\tau,\rho} $ together with all elements of $ E\coloneqq\menge{\tau\cup\menge{\vr},\tau\cup\menge{\rv},\rho\cup\menge{\vr},\rho\cup\menge{\rv}} $ that are again extended tangles. In other words, $ \cT(\ell+1) $ is obtained from $ \cT(\ell) $ by performing all possible extensions of $ \tau $ and $ \rho $ by $ r $. Since $ s $ can be replaced by $ r $, for some orientation $ \vr $ of $ r $ we have that both $ \tau\cup\menge{\vr} $ and $ \rho\cup\menge{\rv} $ are extended tangles and hence in $ \cT(\ell+1) $. The replacement of $ s $ with $ r $ causes a split if and only if at least one of the other two elements of $ E $ is also an extended tangle. We enumerate the (unordered) pairs of tangles in $ \cT(\ell+1) $ by following the enumeration of the pairs in $ \cT(\ell) $, substituting $ \tau $ and $ \rho $ in those pairs with $ \tau\cup\menge{\vr} $ and $ \rho\cup\menge{\rv} $, respectively, and (if there was a split) appending all pairs containing $ \tau\cup\menge{\rv} $ or $ \rho\cup\menge{\vr} $ at the end of the enumeration in an arbitrary order. To obtain $ N(\ell+1) $, for a pair $ \xi,\zeta $ of extended tangles in $ \cT(\ell+1) $ we take the following as distinguishing separation $ s_{\xi,\zeta}\in N(\ell+1) $ : if both of $ \xi $ and $ \zeta $ lie in $ E $ we set $ s_{\xi,\zeta}\coloneqq r $ if $ r $ distinguishes $ \xi $ and $ \zeta $ and otherwise $ s_{\xi,\zeta}\coloneqq s $; if neither of $ \xi $ and $ \zeta $ lies in $ E $, we take the $ s_{\xi,\zeta} $ from $ N(\ell) $; and finally, if exactly one of $ \xi $ and $ \zeta $ lies in $ E $, say $ \zeta\in E $, we take as $ s_{\xi,\zeta} $ the $ s_{\xi,\zeta\sm r} $ from~$ N(\ell) $. We then continue with the next step~$ \ell+1 $.
%$ \menge{\vs_{\xi,\tau\cup\menge{\vr}},\vs_{\tau\cup\menge{\vr},\xi}}\coloneqq\menge{\vr,\rv} $

%If this replacement causes a split of one or both of $ \tau $ and $ \rho $, we append all new pairs of tangles containing the newly created tangle(s) to the end of our enumeration. % (Replacing $ s $ with $ t $ or some corner of $ s $ and $ t $ reduces the number of separations in $ N_{<k} $ crossing $ s $, so this case can occur only finitely many times.)
%Continue with the next step $ \ell + 1 $, with $ \cT(\ell+1) \coloneqq \cT(\ell) $.
%We obtain $ \cT(\ell+1) $ from $ \cT(\ell) $ by adding to $ \tau $ and $ \rho $ the corresponding orientations of the replacement of $ s $; if this replacement causes a split of one or both of $ \tau $ and $ \rho $, we also add those newly created tangle(s) to $ \cT(\ell+1) $. The enumeration of the pairs of tangles in $ \cT(\ell+1) $ will be the same as the enumeration for $ \cT(\ell) $, except that if replacing $ s $ caused a split we append all pairs of tangles containing the newly created tangle(s) arbitrarily at the end of the enumeration.

Else, if (2) occurs for {\em all} $ t\in N_{<k}(\ell) $ which cross $ s $, we obtain $ N(\ell + 1) $ and $ \cT(\ell+1) $ by performing those replacements one after the other as above, considering those $ t $ crossing $ s $ in increasing order of the index of the pairs of tangles. If, after some of these replacements have taken place, we find that we can no longer replace the next $ t $ with $ s $ or one of its corners with $ s $, we can conclude that one of the tangles corresponding to that $ t $ or $ s $ is fake: we can apply \cref{lem:findfake} to the extended tangles resulting from the replacements so far. Then (1) cannot apply to these extended tangles since (1) did not apply to the original extended tangles. In the case where we find a fake tangle, we omit this fake tangle from $ \cT(\ell+1) $ and its associated separations from $ N(\ell + 1) $ and continue with the next step, $ \ell + 1 $.
Otherwise, if all those replacements were successful, $ s=s_{\tau,\rho} $ will be nested with $ N_{<k}(\ell + 1) $, but $ N_{<k}(\ell + 1) $ itself might not be nested anymore; we also continue with the next step $ \ell + 1 $. %(Observe that, from here on the separation $ s_{\tau,\rho} $ remains unchanged until the next step, say $ \ell + j $, when $ k(\ell + j) $ is at least the index of $ \tau,\rho $, at which point $ s_{\tau,\rho} $ will be nested with $ N_{<k} $ by the fish lemma. Thus at step $ \ell + j $ the set $ N_{<(k+1)} $ will be nested. The formal statement of this observation is Lemma ??.)

Else (3) occurs for some $ t\in N_{<k}(\ell) $ crossing $ s $. We then remove the fake tangle from $ \cT(\ell) $ to obtain $ \cT(\ell + 1) $ and remove all separations associated with that tangle from $ N(\ell) $ to obtain $ N(\ell + 1) $.
Then continue with step $ \ell + 1 $.

\paragraph{Termination and Correctness}\label{par:tot_termination_and_correctness}
We define a quasi-order on the set of all sets $ \cT $ of extended tangles by letting $ \cT\prec\cT' $ if for some $ p $ the set $ \cT $ contains strictly fewer $ p $-element tangles than $ \cT' $, and for all $ q<p $ the number of $ q $-element tangles in $ \cT $ and $ \cT' $ is the same. Observe that throughout the algorithm above, $ \cT $ never increases with respect to this quasi-order, i.e.\ $\cT(\ell+1) \preceq \cT(\ell)$ for all $\ell$. In fact $\cT(\ell+1)$ and $\cT(\ell)$ are equal as sets whenever they are equivalent in the quasi-order.

\begin{LEM}\label{lem:induction_tot}
If $\ell$ is such that in the $\ell$-th step of the algorithm $\cT$ is minimal with respect to the quasi-ordering defined above, and $k$ is a natural number, then there is a $j$ for which the set $N_{<k}(\ell+j)$ is nested. Moreover, every separation which was nested with $N_{<k}(\ell)$ is still nested with $N_{<k}(\ell+j)$.
\end{LEM}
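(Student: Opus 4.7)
The plan is induction on $k$. The initial observation is that minimality of $\cT(\ell)$ forces $\cT(\ell+j)=\cT(\ell)$ as sets for every $j\ge 0$: since $\cT$ is weakly decreasing in the quasi-order and (by the remark preceding the lemma) equivalent consecutive $\cT(\ell+j)$ and $\cT(\ell+j-1)$ are equal as sets, an induction on $j$ shows that minimality propagates. Hence the enumeration of pairs and their indices are fixed throughout subsequent iterations, and in particular case (3) of the algorithm cannot occur.

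For the base case $k=0$ we have $N_{<0}(\ell)=\emptyset$, which is trivially nested with vacuous moreover clause; take $j=0$. For the inductive step, I would apply the IH at step $\ell$ to obtain some $j_k$ with $N_{<k}(\ell+j_k)$ nested and the moreover clause. The key observation is that, as long as $N_{<k+1}$ is not nested, we have $k(\ell+j)\le k$: this is because $k(\ell+j)$ is the largest index whose prefix is nested, so $N_{<k+1}(\ell+j)$ failing to be nested disqualifies any such index $\ge k+1$. Consequently the algorithm only processes pairs of index at most $k$ from step $\ell+j_k$ onward, and every modification of $N$ stays within $N_{<k+1}$.

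To argue termination I would track the multiset $M(\ell+j)$ of orders of separations in $N(\ell+j)$ in the well-founded reverse-lexicographic order: by the proof of \cref{lem:findfake}, case (a) of the algorithm strictly decreases $M$ (since $s$ is replaced by a corner of strictly smaller order), while case (b) decreases $M$ weakly (each replacement has order at most that of the removed $t$). Hence $M$ stabilizes after finitely many steps, after which only order-preserving case (b) steps can occur. Showing that these too terminate is what I expect to be the main obstacle, likely requiring a secondary well-founded measure such as the number of unresolved crossings between pairs of index at most $k$, or the multiset of separations themselves under the $\preceq$-type relation induced by $\le$.

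For the moreover clause, I would use the standard structural property that if $u$ is nested with two separations $s$ and $t$, then $u$ is nested with every corner of $s$ and $t$. Since the algorithm's replacements are always corners of $s$ and $t$ (or equal to $s$), and since by the index-bound argument both $s$ and $t$ lie in $N_{<k+1}$ at the relevant step, any $u$ nested with $N_{<k+1}(\ell)$ -- combined with the IH for $k$, which keeps $u$ nested with the evolving $N_{<k}$ -- remains nested with $N_{<k+1}(\ell+j_{k+1})$, completing the inductive step.
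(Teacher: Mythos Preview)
Your proposal has a genuine gap, and to your credit you flag it yourself: the multiset-of-orders argument stalls once only order-preserving replacements remain, and you offer no secondary measure that actually works. This is exactly the heart of the lemma, so the sketch as it stands does not prove termination.

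The paper avoids this difficulty by a structurally different argument. Rather than seek a global well-founded measure, it applies the induction hypothesis \emph{twice}. First, use the hypothesis for $k-1$ to reach a step $\ell+j_1$ at which $N_{<k-1}$ is nested; among all such steps choose one where the $k$-th separation $s$ crosses as few elements of $N_{<k-1}$ as possible. If $s$ is already nested with $N_{<k-1}$, we are done. Otherwise the algorithm at that step performs the case-(2) replacements, replacing each crossing $t$ by a corner of $s$ and $t$; this makes $s$ nested with the new $N_{<k-1}$ but may un-nest $N_{<k-1}$ itself. Now apply the induction hypothesis for $k-1$ \emph{again}, starting from step $\ell+j_1+1$: this re-nests $N_{<k-1}$, and the `moreover' clause of the hypothesis guarantees that $s$ remains nested with the result. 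The fish lemma then yields the moreover clause for $k$, exactly as you outline in your final paragraph.

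So the idea you are missing is that the moreover clause is not merely a bonus to be verified at the end, but the engine that makes the induction go: it lets you repair the damage done by a case-(2) step without losing the progress made on $s$. Your multiset approach would need an independent termination argument for order-preserving case-(2) steps, and none is apparent.

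A smaller point: your claim that case (1) strictly decreases the order multiset is not justified by the algorithm as written. Case (1) allows replacing $s$ by $t$ itself whenever $|t|\le|s|$, which need not be strict. The proof of \cref{lem:findfake} does yield a strictly smaller corner when it lands in the branch corresponding to (1), but the algorithm tests for the \emph{possibility} of (1), not for that particular witness. In the paper's argument this issue is absorbed by the crossing-count minimisation (a case-(1) replacement always strictly reduces the number of elements of the nested $N_{<k-1}$ that $s$ crosses, by the fish lemma), which is another place where the fish lemma does real work.
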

\begin{proof}
We proceed by induction on $k$. The assertion clearly holds for $ k=1 $ with $j=0$.

So suppose that $ k>1 $ and that the above assertion holds for $k-1 $. Let $j_1$ be such that $N_{<(k-1)}(\ell+j_1)$ is a nested set that is nested with every separation which was nested with $N_{<(k-1)}(\ell)$ and, subject to this, such that the $k$-th separation $s$ crosses as few elements of $N_{<(k-1)}(\ell+j_1)$ as possible. If this separation is nested with $N_{<(k-1)}(\ell+j_1)$, we are done.
Otherwise, (1) happens for all $ t\in N_{<k}(\ell+j_1) $ which cross $ s $. Thus, we replace all these $t$ with some corner of $t$ and $s$. After this replacement, $N_{<(k-1)}(\ell+j_1+1) $ is now nested with $s$. Thus, by applying the induction hypothesis again there is some $j_2$ such that the set $N_{<(k-1)}(\ell+j_1+j_2)$ is nested again and, by the moreover part of the statement, is also nested with $s$. Additionally, every separation which was nested with $N_{<k}(\ell)$ is also nested with $N_{<k}(\ell+j_1+j_2)$ by the fish lemma.
\end{proof}

Since the sequence $ (\cT(\ell))_{\ell\ge 0} $ is decreasing in the quasi-order, and there are only finitely many extended tangles, there is an $ \ell_0 $ such that $ \cT(\ell) $ and $ \cT(\ell_0) $ are equivalent in the quasi-order for all $ \ell\ge\ell_0 $. Then $ \cT(\ell) $ and $ \cT(\ell_0) $ are also equal as sets for all $ \ell\ge\ell_0 $. Let~$k$ be the number of pairs of tangles from $\cT(\ell_0)$. By Lemma \ref{lem:induction_tot}, for some $\ell_1\ge\ell_0$ the set $N_{<(k+1)}(\ell_1)$ will be nested. Then $ N=N_{<(k+1)}(\ell_1) $ is the desired nested set, and the algorithm indeed terminates in step $ \ell_1 $.

%Thus the algorithm is correct provided that, as claimed before,
%We show that the algorithm terminates by showing by induction on $ k $ that if $ N_{<k} $ is nested then $ N_{<(k+1)} $ is nested after finitely many steps. For this, consider a step in which $ N_{<k} $ is nested and where, subject to this, $ \cT $ is minimal in the above partial order, and subject to that, in which $ s=s_{\tau,\rho} $ crosses minimally many separations of $ N_{<k} $, where $ \tau,\rho $ is the $ k $-th pair of tangles at that point. We claim that $ s $ is nested with $ N_{<k} $. If not, then $ s $ crosses some $ t\in N_{<k} $. If (3) happened, then after removing the fake tangle, $ \cT $ would become strictly smaller in the partial order; and if (2) happened, then after replacing $ s $ with $ t $ or some corner it would cross fewer separations of $ N_{<k} $. So suppose that (1) happens for all $ t\in N_{<k} $ which cross $ s $. If any of those replacements fails, some tangle in $ \cT $ gets removed for being fake, which again contradicts the minimality of $ \cT $ after we apply the induction hypothesis to see that eventually $ N_{<k} $ will be nested again. But if all those replacements can be made, then we can apply the induction hypothesis to see that $ N_{<k} $ will eventually be nested again, at which point $ s $ (which has not been altered during this) will be nested with $ N_{<k} $ by the fish lemma.
%one of (1), (2) and (3) happens whenever $ s=s_{\tau,\rho} $ crosses some $ t=s_{\phi,\psi}\in N_{<k}(\ell) $ in a step~$\ell$. Let us show that this is indeed the case.

\paragraph{Runtime}
Let $l$ denote the number of times that a replacement causes a split throughout the algorithm. We will later discuss a possible {\em a priori}  bound on $l$. For our calculation of the worst-case runtime we may assume that we never delete tangles that are fake, since deleting fake tangles from $\cT$ only reduces the remaining runtime. Furthermore, for calculating the worst-case runtime, it does not make a difference at which point during the algorithm the replacements causing splits take place: thus, for simplicity, we will assume that we start with a list of $\abs{\cT}+l$ many tangles, that is, with $\binom{\abs{\cT}+l}{2}$ many pairs of tangles, and that from there on no replacement of a separation causes any further splits.

We shall compute the worst-case runtime of our algorithm recursively. Let $r_m$ denote the worst-case runtime of the algorithm for a list of $r_m$ pairs of tangles. For a list of $m+1$ pairs of tangles the algorithm first runs on the sub-list of the first $m$ pairs of tangles, computing a nested set $N_{<m+1}$ which distinguishes the first $m$ pairs of tangles. From there the algorithm considers the separation $s:=s_{\tau,\rho}$, where $\tau,\rho$ is the $(m+1)$-st pair $\tau,\rho$ of tangles, and iterates over $N_{<m+1}$ to check which of (1), (2) or (3) occurs for the $t\in N_{<m+1}$ which cross $s$. By our assumption above (3) does not occur. Let $c$ denote the longest possible time it takes to check for a single $t\in N_{<m+1}$ crossing $s$ which of (1), (2) and (3) occurs. $c$ depends on the time it takes to check whether a separation can be added to a given tangle, which in turn is a function of both the length of that tangle as well as the size of the elements in $\cF$; again, we postpone our estimation of $c$ and will treat $c$ as a constant.

If $k$ is the number of times that (1) happens for $s$ and some $t\in N_{<(m+1)}$, then the algorithm performs at most $k$ replacements of $s$, each time going through the entire $m$-element list $N_{<(m+1)}$ and checking each element of $N_{<(m+1)}$ in time $c$. Following the $(k+1)$-st iteration, in which no $t\in N_{<(m+1)}$ with (1) is found, the algorithm performs the replacements of all $t\in N_{<(m+1)}$ with (2) simultaneously. The subset of those newly replaced $t\in N_{<(m+1)}$ might not be nested any more, and it takes the algorithm $r_{m-k}$ time to make it nested again.\footnote{
	Note that, after these replacements, the only separations in $N_{<(m+1)}$ that the replacement of some $t$ for which (2) happened can cross are replacements of other $t'\in N_{<(m+1)}$ for which (2) happened.
}
Thus we have
\[ r_{m+1}=O\braces{r_m+c\cdot(k+1)\cdot m+r_{m-k}}. \]
We will show by induction that
\[ r_{m+1}=O\braces{ c\braces{2^{m+1} - (m+2)} }. \]
The induction start for $r_2$ is easy. Suppose now that the above equation holds for all smaller values of $m$ and let us derive it for $r_{m+1}$. Then for $m\ge 2$ the function $ {k\mapsto c\cdot(k+1)\cdot m+r_{m-k}} $ attains its maximum at $k=0$, showing that for the worst-case runtime occurrences of (2) are worse than those of (1). With this we have
\begin{align*}
r_{m+1}&=O\braces{r_m+c\cdot(k+1)\cdot m+r_{m-k}}\\
&=O\braces{ 2\cdot c\braces{2^{m} - (m+1)} + c\cdot m}\\
&=O\braces{ c\braces{2^{m+1} - (m+2)} },
\end{align*}
concluding the induction.

Therefore the total runtime is
\[ O\braces{c\cdot2^{\binom{\abs{\cT}+l}{2}}}. \]
Recall that $c$ depends on the structure of $\cF$; in particular, if the size of the elements of $\cF$ is unbounded, there might not be a polynomial bound on $c$ in terms of the maximal length of the tangles involved. Let us now discuss possible bounds on $l$, the number of times that a replacement may cause a split.

If $U'\subseteq U$ is the smallest sub-universe of $U$ which contains $S$, the only {\em a priori} bound on $l$ we can get is $l\le|\cT|\cdot 2^{|U'|}$, since every tangle in $\cT$ might be extendable by every orientation of the rest of $U'$. This results in an abysmal runtime of
\[ O\braces{ c\cdot 2^{\binom{|\cT|+|\cT|\cdot2^{|U'|}}{2} } }. \]

However, in practice the runtime is not that bad: first of all, replacements only rarely cause splits. In particular, if at the start of the algorithm every tangle in $\cT$ orients all of $S$, then initially no replacement can cause a split since the algorithm is initiated with ${N\sub S}$.

Moreover, in practice the maximal length of a tangle in $\cT$ does not increase much throughout the algorithm, and hence $O(\abs{S})$ is a reasonable bound on this length in practice, resulting in a polynomial bound on $c$ if the elements of $\cF$ are of bounded size.

Out of the three cases (1), (2) and (3) which can occur during the algorithm, any instance of (3) happening reduces the runtime significantly: not only does $\cT$ get smaller, but deleting a fake tangle also reduces the possibilities for causing splits in the future.

Finally, and most importantly, the theoretic calculations above assume that two separations cross whenever we did not make them nested before. However, in practice it happens quite often that two separations of $N$ are already nested and no replacement is necessary, for instance simply due to both being the same separation. %Thus, in practise, the runtime with a set $S$ of $500$ separations which are bipartitions of a set of size $758$ and a set $\cT$ of $370$ tangles on a 2017 MacBook Pro (13", i5) is about $35$ minutes, on a set of $23$ tangles only about $90$ seconds.

%(Christian): We still need to show that the algorithm is correct i.e., that we end with a nested set with distinguishes all the  real maximal tangles of S (This is easy: We just need to show, that for each of these tangles, there allways remains a tangle in tau, who is a subset of it.

% \section{Tangles in graphs}
% Maximal separations, tight separations

\bibliographystyle{abbrv}
\bibliography{collective.bib}

\end{document}